\documentclass[12pt, twoside, leqno]{article}
 \pdfoutput=1
\usepackage{amsmath,amsthm}
\usepackage{amssymb,latexsym}
\usepackage{enumerate}
\usepackage{lscape}
\usepackage{booktabs}





\theoremstyle{definition}

\newtheorem{teo}{Theorem}[section]

\newtheorem{lemmino}[teo]{Lemma}
\newtheorem{prop}[teo]{Proposition}








\numberwithin{equation}{section}

\frenchspacing

\textwidth=13.5cm
\textheight=23cm
\parindent=16pt
\oddsidemargin=-0.5cm
\evensidemargin=-0.5cm
\topmargin=-0.5cm

\begin{document}

\baselineskip=17pt

\title{Classification of Algebraic Function Fields with Class Number One}

\author{Pietro Mercuri
\\
Claudio Stirpe}

\date{ }

\maketitle

\renewcommand{\thefootnote}{}

\footnote{2010 \emph{Mathematics Subject Classification}: Primary 11R29; Secondary 11R37.}

\footnote{\emph{Key words and phrases}: Class numbers, Class Field Theory.}

\renewcommand{\thefootnote}{\arabic{footnote}}
\setcounter{footnote}{0}

\begin{abstract}
In this paper we prove that there are exactly eight function fields, up to isomorphism,
over finite fields with class number one and positive genus. This classification was already suggested,
although not completely proved, in a previous work about this topic (see Stirpe~\cite{st1}).
\end{abstract}

\section{Introduction}
The problem of the determination of algebraic function fields over finite fields with class number one was already treated in the paper of Leitzel, Madan and Queen \cite{lemaqu}. But, in Stirpe \cite{st1}, one more example is given, showing that the previous classification is incomplete. In this paper we give the full list of function fields with class number one and positive
genus 
 hence their classification 
 is now complete.

The list of all function fields with class number one and positive genus is given in the following theorem.

\begin{teo}\label{main} 
Let $K$ be a function field over the finite field $\mathbb{F}_q$ of genus $g>0$ with class number one.
Then $K$ is isomorphic to a function field $\mathbb{F}_q(x,y)$ defined by one of the following equations:\\
(i) $y^2+y+ x^3 + x + 1=0$, with $g=1$ and $q=2$;\\
(ii) $y^2 + y+ x^5 + x^3 + 1=0$, with $g=2$ and $q=2$;\\
(iii) $y^2+y+ (x^3 + x^2 + 1)/(x^3 + x + 1)=0$, with $g=2$ and $q=2$;\\
(iv) $y^4+xy^3+(x^2+x)y^2+(x^3+1)y+x^4+x+1=0$, with $g=3$ and $q=2$;\\
(v) $y^4+(x^3+x+1)y+x^4+x+1=0$, with $g=3$ and $q=2$;\\
(vi) $y^2+2 x^3 + x + 1=0$, with $g=1$ and $q=3$;\\
(vii) $y^2+y-x^3+\alpha=0$, with $g=1$ and $q=4$,
where $\alpha$ is a generator of the multiplicative group $\mathbb{F}^*_4$;\\
(viii) $y^5+y^3+y^2(x^3+x^2+x)+y(x^7+x^5+x^4+x^3+x)/(x^4+x+1)+ (x^{13}+x^{12}+x^8+x^6+x^2+x+1)/(x^4+x+1)^2=0$, with $g=4$ and $q=2$.\\
\end{teo}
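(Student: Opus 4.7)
My plan is to combine Weil's theorem with class field theory to reduce the classification to a small, finite case analysis. Let $L_K(T)=\prod_{i=1}^{2g}(1-\alpha_i T)\in\mathbb{Z}[T]$ be the numerator of the zeta function of $K$, so that $h=L_K(1)=\prod_{i=1}^{2g}(1-\alpha_i)$ with $|\alpha_i|=\sqrt{q}$ by the Weil bound. Since $|1-\alpha_i|\geq\sqrt{q}-1$, we get $h\geq(\sqrt{q}-1)^{2g}$, and the hypothesis $h=1$ with $g\geq 1$ immediately forces $q\leq 4$. So only the three constant fields $q\in\{2,3,4\}$ need to be treated.

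Next, for each of these $q$ I would bound the genus. This is achieved via refined explicit formulas of Oesterl\'e--Serre type (or via the sharpened Weil-type bounds already used in \cite{lemaqu} and \cite{st1}), which yield effective lower bounds $h\geq f_q(g)$ with $f_q(g)\to\infty$ and cut the list of admissible pairs $(q,g)$ to a finite set. For each surviving pair I would then enumerate the candidate $L$-polynomials: integer polynomials of degree $2g$ satisfying the Weil functional equation $L_K(T)=q^g T^{2g}L_K(1/(qT))$, with all complex roots on the circle $|T|=1/\sqrt{q}$, and with $L_K(1)=1$. The non-negativity of the place counts $B_n$, which are determined from the $\alpha_i$ by $\sum_{d\mid n} d\,B_d = q^n+1-\sum_i \alpha_i^n$, further prunes this list to a short, explicitly checkable family.

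The last step is to realise each surviving $L$-polynomial by an actual function field, or to prove no such field exists. Class field theory --- starting from the rational function field $\mathbb{F}_q(x)$ and building abelian extensions with prescribed conductor, ramification and splitting behaviour --- should provide the eight explicit models listed in (i)--(viii), presumably as ray class fields attached to carefully chosen moduli, together with constant-field descent arguments. I expect the main obstacle to lie in the boundary cases at $q=2$, in particular in correctly identifying the example missed in \cite{lemaqu} and recovered in \cite{st1}: the Weil-type inequalities leave enough slack there that several a priori candidates survive the numerical pruning, and each must either be constructed explicitly via class field theory or ruled out by a delicate case-by-case argument. Excluding all genera $g\geq 5$ for $q=2$ in Step 2 is where I expect the bulk of the technical work to concentrate, since this is where the gap between the generic Weil bound and the sharper explicit-formula bound has to be exploited most carefully.
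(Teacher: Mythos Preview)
Your outline correctly reduces to $q\in\{2,3,4\}$ and to a finite list of pairs $(q,g)$ and candidate $L$-polynomials; all of this is classical and is indeed how Madan--Queen \cite{maqu} and Leitzel--Madan--Queen \cite{lemaqu} proceed. But you misidentify where the difficulty lies. Excluding $g\geq 5$ for $q=2$ is \emph{not} the hard part: it was settled in \cite{maqu}, and the surviving zeta function at $(q,g)=(2,4)$ (with $B_1=B_2=B_3=0$, $B_4=1$, $B_5=3$) is pinned down there as well. Cases (i)--(vii) are likewise already in \cite{lemaqu}. The open problem this paper addresses is \emph{uniqueness} at $(q,g)=(2,4)$: showing that at most one function field, up to isomorphism, realises the given $L$-polynomial.

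Your proposal does not contain an idea for this step. Knowing the $L$-polynomial does not determine $K$ up to isomorphism, and your suggestion to ``build abelian extensions with prescribed conductor'' presupposes that $K$ is abelian over some $\mathbb{F}_2(x)$, which is not a priori true for a degree-$5$ extension in characteristic $2$. The paper's argument is precisely to force this: one first shows (via Riemann--Roch and Hurwitz) that $K/\mathbb{F}_2(x)$ has degree $5$ and is totally ramified at a single degree-$4$ place $\mathfrak{f}$. Then, taking the known example $L$ from \cite{st1} (which is cyclic with the same ramification), Abhyankar's Lemma makes $KL/K$ unramified; since $h_K=1$ this forces $KL\subseteq K\mathbb{F}_{32}$, and Galois theory then shows $K/\mathbb{F}_2(x)$ is itself cyclic. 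Only at that point does class field theory apply: there are exactly five cyclic degree-$5$ extensions of $\mathbb{F}_2(x)$ with conductor $\mathfrak{f}$, and among them only one (up to isomorphism) has no small-degree places. This compositum-with-$L$ trick, leveraging $h_K=1$ to kill the unramified part, is the missing ingredient in your plan.
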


Function fields (i-vii) are already given in \cite{lemaqu} but in that paper the authors wrongly claimed to have ruled out the
only other possibility of curves of genus 4 over $\mathbb{F}_2$. Madan and Queen also showed in \cite{maqu}
that any other example of function field with class number one and positive genus should be a function field of genus 4 with
exactly one place of degree 4 and without places of smaller degree. A function field with this properties was later found in
\cite{st1} but the author did not prove that such example is unique up to isomorphism. This function field is listed in (viii)
and we prove uniqueness in Section 3 giving a definitive proof of Theorem \ref{main}.
 The proof is elementary and requires only basic facts about function fields and Class Field Theory.
The necessary background is given in the next section.

In the fourth section we show another proof that is a complete version of the argument in \cite{lemaqu}.
The interested reader can also find a simplified argument in Qibin Shen and Shuhui Shi \cite{shen}
which was done independently around the same time.

\section{Background and definitions}
Given two extensions of function field $K$ and $L$ of $\mathbb{F}_q(x)$, we assume that they are embedded in a same
algebraic closure of $\mathbb{F}_q(x)$.

Let $K/F$ be a separable extension of function fields. Let $n$ be the degree $[K:F]$ and let $g_K$ and $g_F$ be the genus of $K$ and $F$, respectively. We assume that the constant fields of $F$ and $K$ are the same. Then the genera of $F$ and $K$
are related by the Hurwitz Genus Formula:
\begin{equation} \label{hur0}
2g_K-2=n(2g_F-2)+ \deg \mathrm{Diff}(K/F),
\end{equation}
where $\mathrm{Diff}(K/F)=\sum_P d_P P$ is the different of $K/F$ and the sum runs over the ramified places of $K$.
The reader can see Stichtenoth \cite[Chapter 3]{st93} for definitions and main results.
For our purposes we need only to know that 
the sum is actually a finite sum and the coefficients $d_P$ are positive integers satisfying
the following inequality, known as Dedekind's Different Theorem (see \cite[Theorem 3.5.1]{st93}):
\begin{equation} \label{dede}
d_P\geq e_P-1,
\end{equation}
where $e_P$ is the ramification index of $P$ in $K/F$ and equality holds if
and only if the characteristic $p$ of the finite field does not divide $e_P$.

Let $P$ be a place of $F$ and let $P_1$, $P_2$, $\ldots$, $P_r$ be the places of $K$ over~$P$.
We denote by $e(P_i|P)$ and $f(P_i|P)$, respectively, the ramification index and the inertia degree
of $P_i/P$ for $i=1,2,\ldots$, $r$. The integers $e(P_i|P)$ and $f(P_i|P)$ are related by the 
following equality 
(see \cite[Theorem 3.1.11]{st93})
\begin{equation} \label{fund}
\sum_{i=1}^r f(P_i|P)e(P_i|P)=n.
\end{equation}

We also need the following lemma.
\begin{lemmino}[Abhyankar's Lemma] \label{abhya}
Let $K_1/F$ and $K_2/F$ be separable extensions of function fields and let $p$ be the characteristic of $F$.
Let $K=K_1K_2$ be the compositum of $K_1$ and $K_2$.
Let $Q$ be a place of $K$ and let $P$ be the place $Q\cap F$. We denote by $P_1$ and $P_2$, respectively,  the places $Q\cap K_1$ and $Q\cap K_2$ and by $e_1$ and $e_2$ the ramification index of $P_1$ and $P_2$ in $K_1/F$ and $K_2/F$. We assume that $p$ does not divide both integers $e_1$ and $e_2$.
Then the ramification index $e_Q$ of $Q/P$ in $K/F$ is given by $$e_Q=\mathrm{lcm}(e_1,e_2).$$
\end{lemmino}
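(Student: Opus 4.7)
My plan is to split the proof into the two divisibility directions for $e_Q$ versus $\mathrm{lcm}(e_1,e_2)$. The easy direction, $\mathrm{lcm}(e_1,e_2)\mid e_Q$, follows immediately from the multiplicativity of ramification indices in the towers $F\subset K_i\subset K$ (a consequence of~(\ref{fund}) applied in each step): one has $e_Q=e(Q|P_i)\cdot e_i$ for $i=1,2$, so both $e_1$ and $e_2$ divide $e_Q$, and hence so does their least common multiple.

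For the reverse bound $e_Q\leq\mathrm{lcm}(e_1,e_2)$, I would pass to the completions $\widehat F_P$, $\widehat K_{1,P_1}$, $\widehat K_{2,P_2}$ and $\widehat K_Q$; completion preserves ramification indices, and since $K=K_1K_2$ one has $\widehat K_Q=\widehat K_{1,P_1}\cdot\widehat K_{2,P_2}$ inside a fixed algebraic closure of $\widehat F_P$. Assume without loss of generality that $p\nmid e_1$. The tameness hypothesis implies that $\widehat K_{1,P_1}/\widehat F_P$ is tamely ramified, and by the standard structure theory of tame extensions of complete discretely valued fields, after first absorbing the maximal unramified subextension into the base, the remaining totally tame extension of degree $e_1$ is generated by an element $\alpha$ satisfying $\alpha^{e_1}=\pi$ for some uniformizer $\pi$ of $\widehat F_P$.

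The problem thus reduces to computing the ramification of $\widehat K_{2,P_2}(\alpha)/\widehat K_{2,P_2}$. Since $\pi$ has valuation $e_2$ in $\widehat K_{2,P_2}$ and $e_1$ is coprime to $p$, a standard Eisenstein/Kummer calculation shows that extracting an $e_1$-th root of an element of valuation $e_2$ produces a totally ramified extension of degree exactly $e_1/\gcd(e_1,e_2)=\mathrm{lcm}(e_1,e_2)/e_2$; multiplying by the ramification $e_2$ of $\widehat K_{2,P_2}/\widehat F_P$ yields the desired bound $e_Q\leq\mathrm{lcm}(e_1,e_2)$.

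The main obstacle is this last purely local computation together with the justification that completion is compatible with the compositum; both are classical results in local field theory but deserve explicit reference. As an alternative avoiding completions entirely, one can instead follow the different-theoretic proof in Stichtenoth~\cite[Theorem 3.9.1]{st93}, which combines Dedekind's Different Theorem~(\ref{dede}) with the transitivity of the different in the towers $F\subset K_i\subset K$ to force the equality $e_Q=\mathrm{lcm}(e_1,e_2)$ without ever leaving the global setting.
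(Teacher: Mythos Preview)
The paper does not actually prove this lemma: its entire proof is the single line ``See \cite[Theorem 3.9.1]{st93}.'' Your proposal therefore already goes further than the paper, and the alternative you mention at the end---simply deferring to Stichtenoth's different-theoretic argument---\emph{is} the paper's proof.

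Your primary route via completions is a correct and standard approach (it is essentially the proof one finds in Serre's \emph{Local Fields} or Neukirch's \emph{Algebraic Number Theory}), and it is genuinely different from Stichtenoth's global argument. The two steps you single out as needing care are exactly the right ones: (i) that the completion $\widehat K_Q$ coincides with the compositum $\widehat K_{1,P_1}\cdot\widehat K_{2,P_2}$ inside a fixed algebraic closure of $\widehat F_P$, and (ii) the local computation that adjoining an $e_1$-th root of an element of valuation $e_2$ (with $p\nmid e_1$) yields ramification index exactly $e_1/\gcd(e_1,e_2)$. Both are classical; for (ii) one should note that the resulting extension may also acquire an unramified piece coming from the unit factor, so the \emph{degree} need not equal $e_1/\gcd(e_1,e_2)$, only the ramification index---your phrasing ``totally ramified extension of degree exactly $e_1/\gcd(e_1,e_2)$'' is slightly imprecise on this point, though it does not affect the conclusion. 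The advantage of the completion approach is conceptual transparency; the advantage of Stichtenoth's approach is that it stays within the global function-field framework already set up in the paper via~(\ref{hur0}) and~(\ref{dede}).
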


\begin{proof}
See \cite[Theorem 3.9.1]{st93}.
\end{proof}
 
Finally, we recall a result of Class Field Theory. Let $\mathfrak{f}$ be a place of $K$ and $S$ be a non-empty set of places of $K$ not containing $\mathfrak{f}$. 
We denote by $K^\mathfrak{f}_S$ the maximal abelian function field extension of $K$ such that $K^\mathfrak{f}_S/K$ is
unramified outside $\mathfrak{f}$ and the points of $S$ are totally split.
It is known that $K^\mathfrak{f}_S$ is a finite extension. The reader can find more details in Auer \cite{au00}.
The following result is a direct consequence of Stirpe \cite[Remark 3.3]{st0} concerning enumeration of cyclic ray class field extensions with given conductor $\mathfrak{f}$.

\begin{prop}\label{artintate}
Let $\mathfrak{m}$ be a place of $K$ of degree $t$. Let $d$ be a divisor of $\frac{h_K}{q-1}(q^{t}-1)$, where $h_K$
is the class number of $K$. Then there are exactly $d$ cyclic extensions of $K$ of degree $d$
with conductor dividing $\mathfrak{m}$ and constant field $\mathbb{F}_q$.
\end{prop}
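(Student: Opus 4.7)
The plan is to derive this counting statement from class field theory for function fields, in the formulation of Auer~\cite{au00}. The basic idea is to identify the extensions in question with cyclic quotients of an explicit finite abelian group (a degree-zero ray class group), compute its order, and then invoke the precise enumeration from~\cite[Remark 3.3]{st0}.

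First, by the existence theorem of class field theory, the finite abelian extensions $L/K$ whose constant field is $\mathbb{F}_q$ and whose conductor divides $\mathfrak{m}$ are in Galois correspondence with the finite-index subgroups of the degree-zero ray class group $G := \mathrm{Cl}^0_{\mathfrak{m}}(K)$. Under this correspondence, cyclic extensions of degree $d$ correspond to cyclic quotients of $G$ of order $d$, or equivalently to surjective homomorphisms $G \twoheadrightarrow \mathbb{Z}/d\mathbb{Z}$ taken modulo the natural $\mathrm{Aut}(\mathbb{Z}/d\mathbb{Z})$-action.

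Second, I would determine $|G|$ from the tautological short exact sequence
$$
1 \longrightarrow \mathbb{F}_q^{\ast} \longrightarrow (\mathcal{O}_K/\mathfrak{m})^{\ast} \longrightarrow G \longrightarrow \mathrm{Cl}^0(K) \longrightarrow 1,
$$
where $\mathcal{O}_K/\mathfrak{m} \simeq \mathbb{F}_{q^t}$ since $\mathfrak{m}$ has degree $t$. This gives $|G|=\frac{h_K(q^t-1)}{q-1}$, in agreement with the divisibility hypothesis on $d$.

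The remaining and decisive step is the actual enumeration of cyclic quotients of $G$ of order $d$. For this I would appeal to~\cite[Remark 3.3]{st0}, where the count of cyclic ray class field extensions of given conductor is worked out explicitly in the needed form. The main obstacle lies precisely here: the value claimed does not follow from $|G|$ alone, since different abelian groups of the same order admit in general different numbers of cyclic quotients of order $d$. One must exploit the structural information contained in the exact sequence above --- notably the cyclicity of $(\mathcal{O}_K/\mathfrak{m})^{\ast}$ of order $q^t-1$ and the fact that its image in $G$ sits as an extension of $\mathrm{Cl}^0(K)$ --- which is exactly the input provided by the cited remark of~\cite{st0}.
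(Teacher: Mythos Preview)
Your proposal is correct and coincides with the paper's treatment: the paper gives no independent argument but states the proposition as a direct consequence of \cite[Remark~3.3]{st0}, and your outline does the same after supplying the standard class-field-theoretic frame (identification with cyclic quotients of $\mathrm{Cl}^0_{\mathfrak{m}}(K)$ and the order computation via the exact sequence). You are also right that the count $d$ is not determined by $|G|$ alone and that the structural input must come from the cited remark; this is precisely where both you and the paper defer to \cite{st0}.
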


\section{Uniqueness in the genus 4 case}
From now on, we denote by $L$ the function field listed in Theorem \ref{main} part (viii). 
The function field $L$ has genus 4 and class number one by \cite{st1}. Let $K$ denote a function
field over the finite field $\mathbb{F}_2$ of genus 4 and class number one, we want to prove that $K$ is isomorphic to $L$.
In \cite{maqu} the zeta function of $K$ has been computed and the authors show that $K$ has exactly three places of
degree 5, one place of degree 4 and no places of smaller degree.

First, we consider the degree $d=[K:\mathbb{F}_2(x)]$. The following Lemma allows us to assume that $d=5$. 

\begin{lemmino}
Let $K$ be a function field with the constant field $\mathbb{F}_2$ and with genus 4 and class number one. Then there is an element $x\in K$ such that the degree $d=[K:\mathbb{F}_2(x)]$ is equal to 5.
\end{lemmino}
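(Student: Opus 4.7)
The plan is to construct an element $x\in K$ whose pole divisor is a single place $P$ of degree $5$. Since for any non-constant $x$ we have $[K:\mathbb{F}_2(x)] = \deg (x)_\infty$, such a construction immediately gives $d=5$.

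The input data is the information recalled just above the lemma: $K$ has exactly three places of degree $5$, exactly one place of degree $4$, and no places of smaller degree. Fix one of the degree-$5$ places and call it $P$. I would apply the Riemann--Roch theorem to the divisor $P$:
\[
\ell(P) - \ell(W-P) = \deg P + 1 - g = 5 + 1 - 4 = 2,
\]
where $W$ is a canonical divisor of $K$ and $\ell(\cdot)$ denotes the dimension of the associated Riemann--Roch space. Since $\deg W = 2g-2 = 6$, we have $\deg(W-P) = 1$.

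The key sub-step is to show $\ell(W-P) = 0$. If instead $\ell(W-P) \geq 1$, then $W-P$ would be linearly equivalent to an effective divisor of degree $1$, and any such divisor must be a single place of degree $1$, contradicting the hypothesis that $K$ has no places of degree less than $4$. Hence $\ell(P) = 2 > 1 = \ell(0)$, so there is a non-constant function $x \in K$ whose pole divisor satisfies $(x)_\infty \leq P$. As $(x)_\infty$ is a non-zero effective divisor bounded above by the prime divisor $P$, one must have $(x)_\infty = P$, and therefore $[K:\mathbb{F}_2(x)] = \deg(x)_\infty = 5$.

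I do not expect a genuine obstacle here: the whole argument reduces to one application of Riemann--Roch together with the rigidity coming from the zeta-function calculation of \cite{maqu}. The only point that needs care is the vanishing of $\ell(W-P)$, which is precisely where the assumption that no place of $K$ has degree less than $4$ is used.
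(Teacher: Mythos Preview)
Your proof is correct and follows essentially the same approach as the paper: pick a degree-$5$ place, use Riemann--Roch to find a non-constant function with poles only there, and conclude via $[K:\mathbb{F}_2(x)]=\deg(x)_\infty$. The only difference is cosmetic: the paper uses the Riemann--Roch inequality $\ell(Q)\geq \deg Q - g + 1 = 2$ directly, whereas you invoke the full equality and then argue separately that $\ell(W-P)=0$; this extra step is valid but unnecessary, since the inequality already yields $\ell(P)\geq 2$.
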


\begin{proof}
Let $Q$ be a place of $K$ of degree 5. By Riemann-Roch Theorem
$$ l_K(Q)\geq \deg Q-g_K+1=2.$$
Let $x\in K$ be a non constant element in $L_K(Q)$. Then the pole divisor $(x)_\infty$ of $x$ is equal to $Q$ and so
(see \cite[Theorem 1.4.11]{st93})
$$d=[K:\mathbb{F}_2(x)]=\deg Q= 5.$$
\end{proof}

From now on, we set $x \in K$ as above so that $K$ is a separable extension of degree 5 of $\mathbb{F}_2(x)$.

By the Hurwitz Genus Formula we can compute the degree of the different of $K/\mathbb{F}_2(x)$
\begin{equation}\label{diff}
\deg \mathrm{Diff}(K/\mathbb{F}_2(x))=2g_K-2+2d=16.
\end{equation}
By Dedekind's Different Theorem $d_Q\geq e_Q-1$  for any ramified place $Q$
of $K$ but equality in (\ref{dede}) holds only if $e_Q$ is odd so $d_Q\geq 2$.
It follows that $\sum_Q \deg Q\leq 8$, but there is only one place
of $K$ of degree 4 and all other places have higher degree. It follows that there is only one ramified place in
$K/\mathbb{F}_2(x)$. We denote by $Q$ such a place of $K$ and by $P$ the place $Q\cap \mathbb{F}_2(x)$.

\begin{lemmino}
The ramified place $P$ in $K/\mathbb{F}_2(x)$ has degree 4 and the function field $K$ is a covering of degree 5 of $\mathbb{F}_2(x)$ totally ramified at $P$.
\end{lemmino}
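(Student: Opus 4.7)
The plan is to complete the enumeration of the ramification data $(\deg P, e_Q, f_Q)$ at the unique ramified place $Q$, and to eliminate every possibility other than the totally ramified tame case $(\deg P, e_Q, f_Q) = (4, 5, 1)$. First I would restrict $\deg P$: since $\deg P$ divides $\deg Q$ (as $\deg Q = f_Q \deg P$) and $\deg Q$ divides $16$ (from $d_Q \deg Q = 16$) while $\deg Q \leq 8$, we have $\deg P \in \{1, 2, 4, 8\}$. The case $\deg P = 1$ is excluded because $\deg Q \geq 4$ would force $f_Q \geq 4$, whence $e_Q f_Q \geq 8 > 5$; the case $\deg P = 2$ yields $(e_Q, f_Q, \deg Q) = (2, 2, 4)$ as the only possibility, but then the fundamental equality leaves a residual $e_i f_i = 1$, requiring an unramified degree-$2$ place of $K$, which does not exist. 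Hence $\deg P \in \{4, 8\}$.

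For $\deg P = 4$ we have $d_Q = 4$, and the admissible pairs $(e_Q, f_Q)$ compatible with Dedekind's bound (with equality only if $e_Q$ is odd), $e_Q f_Q \leq 5$, and $f_Q \mid 4$ are $(5, 1)$, $(4, 1)$, $(2, 1)$, $(2, 2)$. The pair $(4, 1)$ forces a second unramified degree-$4$ place of $K$ above $P$, contradicting uniqueness; $(2, 2)$ forces a degree-$2$ place of $K$; and $(2, 1)$ requires the remaining $\sum_{i\geq 2} e_i f_i = 3$ to be realized by unramified $P_i$ with $f_i \geq 2$ (any $f_i = 1$ would duplicate the degree-$4$ place $Q$), forcing a single degree-$12$ place above $P$. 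The first two possibilities are thereby immediately contradicted; the wild sub-case $(2, 1)$ is not refuted by elementary place counts, and I would eliminate it by examining the Galois closure of $K/\mathbb{F}_2(x)$ — which must be non-trivial since a cyclic degree-$5$ extension of $\mathbb{F}_2(x)$ would be tamely ramified at $P$ (so $e \in \{1, 5\}$, not $2$) — together with Proposition~\ref{artintate} applied to $\mathbb{F}_2(x)$ with $\mathfrak{m} = P$, to reach a contradiction with $h_K = 1$.

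For $\deg P = 8$ the analogous analysis gives $d_Q = 2$, forcing $e_Q \in \{2, 3\}$ and hence (as in the previous paragraph) that $K/\mathbb{F}_2(x)$ is non-Galois; a Hurwitz computation shows that any tamely ramified cyclic degree-$5$ extension of $\mathbb{F}_2(x)$ ramified at a single degree-$8$ place has genus $12$, not $4$, ruling out the Galois case. The remaining non-Galois sub-cases are eliminated by combining the class-field-theoretic enumeration of Proposition~\ref{artintate} with an analysis of where the unique degree-$4$ place $P_4$ of $K$ lies over $\mathbb{F}_2(x)$: the divisibility $\deg P' \mid 4$ and the absence of small-degree places of $K$ leave only a handful of possibilities, each contradicting either the ramification data at $P$ or the splitting prescribed by Proposition~\ref{artintate}. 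The main obstacle throughout is the elimination of the wildly ramified sub-cases ($e_Q = 2$), which survive all elementary degree-counting and require a genuine appeal to the Galois closure and the class-number-one hypothesis via Proposition~\ref{artintate}.
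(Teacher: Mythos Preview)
Your elementary case analysis is mostly sound, but there is one slip and, more seriously, a genuine gap at the hard cases. The slip: under $\deg P = 4$ with $(e_Q,f_Q)=(2,2)$ one has $\deg Q = 8$ (hence $d_Q = 2$, not $4$), and the residual place above $P$ has $f=1$, giving a place of $K$ of degree $4$, not $2$. That second place could simply be the unique degree-$4$ place of $K$, so nothing is contradicted; this sub-case is in fact the same as your $\deg P = 8$, $e_Q = 2$ case viewed from the other prime below, and must be handled by the same non-elementary argument.

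The real gap is your treatment of the surviving cases $e_Q=2$ and $e_Q=3$ (equivalently, $d_Q=2$, $\deg Q=8$) and $e_Q=2$ with $\deg Q=4$. You propose to pass to the Galois closure and invoke Proposition~\ref{artintate}, but that proposition only enumerates \emph{cyclic} extensions with conductor dividing a fixed place; the Galois closure of a non-Galois degree-$5$ extension has group $D_5$, $F_{20}$, $A_5$ or $S_5$, and you give no mechanism linking such a closure to the cyclic count, nor any argument that the closure is unramified over $K$ so that $h_K=1$ could bite. As written, this step is a hope rather than a proof.

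The paper eliminates exactly these cases by a different and concrete device: it builds an auxiliary extension of $\mathbb{F}_2(x)$ with the \emph{same} ramification index at $P$ as $K$ --- a Kummer extension $t^3+u=0$ over $\mathbb{F}_4(x)$ when $e_Q=3$, and an Artin--Schreier extension $t^2+t+u^{-m}=0$ (with $m$ chosen so that the different exponents match) when $e_Q=2$ --- then forms the compositum with $K$. By Abhyankar's Lemma (in the tame case) or by a direct comparison of different exponents (in the wild case), the compositum is unramified over both factors, and computing its genus two ways via the Hurwitz formula produces a numerical contradiction (e.g.\ $5\mid 18$ or $5\mid 12$). This auxiliary-extension trick is the key idea you are missing; the Galois-closure route, at least as you have sketched it, does not replace it.
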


\begin{proof}
By equality (\ref{diff}) the place $Q$ over $P$ satisfies $d_Q\deg Q=16$. It follows that either $d_Q=2$ and $\deg Q=8$ or $d_Q=4$ and $\deg Q=4$.

We show that the first case cannot occur.
We assume that $d_Q=2$, hence by Dedekind's Theorem $e_Q\leq d_Q+1=3$ so either $e_Q=2$ or $e_Q=3$. We denote by $u\in \mathbb{F}_2[x]$ the irreducible monic polynomial with zero divisor $Z_u$ equal to $P$.

STEP 1 (Case $e_Q=3$). 
We consider the extension of constant field $\mathbb{F}_4(x)/\mathbb{F}_2(x)$.
The place $P$ has even degree because $\deg Q=8$, hence it splits in two places $P_1$ and $P_2$. We consider a Kummer extension $E/\mathbb{F}_4(x)$ of degree~3 of $\mathbb{F}_4(x)$ totally ramified at $P_i$ for $i=1$, $2$ and unramified outside $P_1$ and $P_2$ (see \cite[Proposition 3.7.3]{st93}). We can define explicitly $E$ as $E=\mathbb{F}_4(x,t)$ where $t$ is such that $t^3+u=0$ (the interested reader can see \cite[Corollary 3.7.4]{st93}). Now, we consider the compositum $EK$. By Abhyankar's Lemma the extensions of function fields $EK/K\mathbb{F}_4$ and $EK/E$ are both unramified and the constant field of $EK$ is 
$\mathbb{F}_4$ because
\begin{equation}\label{deg1}
[EK:K\mathbb{F}_4]=[E:\mathbb{F}_4(x)]=3,
\end{equation}
is coprime to 
\begin{equation}\label{deg2}
[EK:E]=[K\mathbb{F}_4:\mathbb{F}_4(x)]=5.
\end{equation}
We can compute the genus of $EK$ in two ways by substitution of (\ref{deg1}) and (\ref{deg2}) in the Hurwitz Formula. We obtain that
$$2g_{EK}-2=[EK:K\mathbb{F}_4](2g_K-2)=3\cdot 6=18,$$
and, similarly,
$$2g_{EK}-2=[EK:E](2g_E-2)=5(2g_E-2).$$ 
This leads to a contradiction $5(2g_E-2)=18$.

STEP 2 (Case $e_Q=2$). The proof is similar to the previous case although is more complicated because
the extension $K/\mathbb{F}_2(x)$ is wild. We consider an Artin-Schreier extension $A/\mathbb{F}_2(x)$
of degree 2 of $\mathbb{F}_2(x)$ ramified at $P$ and unramified outside $P$. 
We can define explicitly $A$ as $A=\mathbb{F}_2(x,t)$ with $t^2+t+u^{-1}=0$ 
(see \cite[Remark 3.7.9, part a]{st93}). By classical theory of Artin-Schreier extensions, 
the exponent of the different $d_{P'}$ of the place $P'$ of $A$ over $P$ is equal to $(p-1)(m_P+1)=2$
(see \cite[Proposition 3.7.8, part c]{st93}), where $m_P=-v_P(u^{-1})=1$.
Again we consider the compositum $AK$. As in Step 1 the constant function field of $AK$ is $\mathbb{F}_2$ because
$$[AK:K]=[A:\mathbb{F}_2(x)]=2,$$
is coprime to 
$$[AK:A]=[K:\mathbb{F}_2(x)]=5.$$
By the Hurwitz Formula we get 
\begin{equation}\label{contrad1}
2g_{AK}-2=[AK:K](2g_K-2)+\sum_{T|P} d(T|T_K)\deg T,
\end{equation}
and 
\begin{equation}\label{contrad2}
2g_{AK}-2=[AK:A](2g_A-2)+\sum_{T|P} d(T|T_A) \deg T,
\end{equation}
where the sums run over the places $T$ of $AK$ over $P$ and where $T_K$ is the place $T\cap K$ and $T_A=T\cap A$.
Since the ramification indices in $K/\mathbb{F}_2(x)$ and in $A/\mathbb{F}_2(x)$ are equal, then the ramification indices $e(T|T_A)$ and $e(T|T_K)$ are equal too. Now, we prove that the coefficients $d(T|T_A)$ and $d(T|T_K)$ are also equal
for any place $T$ of $AK$ over $P$. By the transitivity of the different (see \cite[Corollary 3.4.12 b]{st93}))
$$d(T|P)=e(T|T_A)d(T_A|P)+d(T|T_A)=e(T|T_K)d(T_K|P)+d(T|T_K),$$
 follows that
\[
d(T|T_K)=d(T|T_A),
 \]
because
\[
e(T|T_K)=e(T|T_A)
\]
and
\[
d(T_K|P)=d(T_A|P)=2.
\]
By equations (\ref{contrad1}) and (\ref{contrad2}) we get a contradiction because
\[
[AK:A]=5
\]
does not divide
\[
[AK:K](2g_K-2)=12.
\]

It follows that the only possible case is $d_Q=4$ and $\deg Q=4$.

Finally, we show that $e_Q=5$. If the ramification is wild we can only have $e_Q=2$ or $e_Q=4$. As in Step 2 above, one can show that $e_Q\neq 2$ by considering $A=\mathbb{F}_2(x,t)$ with $t^2+t+u^{-3}=0$.
We assume that $e_Q=4$. By equality (\ref{fund}) we see that $f_Q<2$, therefore $f_Q=1$ and, similarly, any other place $P'$ of $K$ over $P$ is not partially inert, i.e. $f_{P'}=1$. But there is only one place of $K$ of degree 4, so $P'$ should be at least partially inert, contradiction. Hence $e_Q\neq 4$. Therefore the ramification must be tame, but in this case the equality holds in equation (\ref{fund}) and we get $e_Q=5$ and $\deg Q=\deg P=4$. 
\end{proof}

\begin{proof}[Proof of Theorem \ref{main}]
Consider the ramified place $P$ under $Q$ in $\mathbb{F}_2(x)$. In the following, without loss of generality we can assume that $P$ is the place $\mathfrak{f}=(x^4+x+1)$. In fact, if $P$ is the place $(x^4+x^3+1)$, we can use an isomorphism of the rational function field, namely $\sigma(x)=1/x$, to send $P$ in $\mathfrak{f}$. Similarly, if $P=(x^4+x^3+x^2+x+1)$ we can choose $\sigma(x)=1/(x+1)$ and again $\sigma(P)=\mathfrak{f}$.
As before we denote by $L$ the function field listed in \ref{main} part (viii).
One can show that $L/\mathbb{F}_2(x)$ is a cyclic extension (see \cite{st1}), totally ramified in $\mathfrak{f}$ with ramification index equal to 5. We show that $K/\mathbb{F}_2(x)$ is also a Galois extension.

We consider the compositum $KL$. By Abhyankar's Lemma \ref{abhya} the field extension $KL/\mathbb{F}_2(x)$ is partially ramified
in $\mathfrak{f}$ with ramification index equal to 5. It follows that $KL/K$ is unramified. But the class number of $K$ is one by hypothesis and $KL/K$ is a Galois extension because $L/\mathbb{F}_2(x)$ is Galois hence $KL\subseteq K\mathbb{F}_{2^5}$. It follows that either $K=L$ or $KL=K\mathbb{F}_{2^5}$. In the first case we are done, in the second one we get
$$L\mathbb{F}_{2^5}= K\mathbb{F}_{2^5}.$$
It follows that the Galois closure of $K$ is contained in $K\mathbb{F}_{2^5}$. The Galois group 
$H=\mathrm{Gal}(\mathbb{F}_{2^5}/\mathbb{F}_{2})$
is canonically embedded in the Galois group 
$G=\mathrm{Gal}(K\mathbb{F}_{2^5}/\mathbb{F}_{2}(x))$ and it is normal.
By Galois Theory, $K$ is a normal extension with Galois group $G/H$ and degree
$$[K:\mathbb{F}_2(x)]=|G/H|=25/5=5.$$
By Class Field Theory, $K$ is a ray class field extension over $\mathbb{F}_{2}(x)$
of degree 5 and genus 4 with conductor $\mathfrak{f}$.
By Proposition \ref{artintate} there are only five of such extensions. These five function fields are 
explicitly constructed in \cite{st1}
but three of them are subfields of the ray class field extensions $K_S^\mathfrak{f}/\mathbb{F}_{2}(x)$ with $S$
given by a rational place of $\mathbb{F}_{2}(x)$: these function fields have rational places and so their class number
is greater than one by Madan and Queen \cite[Theorem 2]{maqu}. 
The last two ray class fields are contained in the ray class field extensions $K_S^\mathfrak{f}/\mathbb{F}_{2}(x)$, where $S$ is a place of $\mathbb{F}_{2}(x)$ of degree 7 given by $P_1=(x^7+x^4+1)$ or $P_2=(x^7+x^3+1)$. These ray class fields have class number one and they are isomorphic to each other. The first choice $S=P_1$ gives the function field $L$. The reader can see Stirpe~\cite[Remark 3]{st1}, for more details and explicit computations. In both cases $K$ is a ray class field extension of $\mathbb{F}_{2}(x)$ isomorphic to $L$.
\end{proof}

\section{Table of the original proof}
In the proof given by Leitzel, Madan and Queen in \cite[Section 2]{lemaqu}, an enumerative argument is given to prove that there are no function fields with class number one and genus 4. They prove that the field of genus 4 and class number one we are looking for, must be defined by a cubic and a quadric. They also prove that this pair must be one of the following:
\begin{align*}
(C_1,Q_1+L(k_1,k_2,k_3,k_4)^2),\\
(C_2,Q_2+L(k_1,k_2,k_3,k_4)^2),\\
(C_3,Q_3+L(k_1,k_2,k_3,k_4)^2),\\
(C_4,Q_4+L(k_1,k_2,k_3,k_4)^2),
\end{align*}
where
\begin{align*}
C_1&=x_2^3+x_1x_3^2+x_4^3+x_1^2x_3+x_3x_4^2, \\
C_2&=x_2^3+x_1x_3^2+x_2^2x_3+x_2^2x_4+x_1^3+x_3^2x_4+x_1^2x_2+x_2x_4^2, \\
C_3&=x_2^2x_3+x_1x_4^2+x_3^3+x_3^2x_4+x_1^2x_2+x_4^3+x_1^2x_3+x_3x_4^2, \\
C_4&=x_1^3+x_1^2x_3+x_1x_4^2+x_2^2x_4+x_2x_4^2+x_3^3+x_3x_4^2+x_4^3, \\
Q_1&=x_1x_2+x_3x_4, \\
Q_2&=x_1x_2+x_1x_3+x_1x_4+x_2x_4, \\
Q_3&=x_1x_3+x_2x_3+x_2x_4+x_3x_4, \\
Q_4&=x_1x_4+x_2x_3+x_3x_4\\
\end{align*}
and where
\[
L(k_1,k_2,k_3,k_4)=k_1x_1+k_2x_2+k_3x_3+k_4x_4,
\]
is a homogeneous linear form in $x_1,x_2,x_3,x_4$ with
$k_1,k_2,k_3,k_4\in\{0,1\}$. The list given at page 15 in \cite{lemaqu} shows a list of only eight possible quadric surfaces and all of them are discarded by the authors case by case. This list is actually incomplete. In the following table, we write down
the complete list and we show in the second column the rational points of lowest degree that allows us to discard each case but one. This case not discarded, i.e. the pair $(C_2,Q_2+L(1,0,1,1)^2)$, defines a field isomorphic to that one of Theorem \ref{main}, case viii. This argument gives a second proof of Theorem \ref{main}.

The complete list, below, shows all the 64 quadric. The interested reader can also find a simplified proof in \cite{shen},
where the number of cases to check is reduced by Qibin Shen and Shuhui Shi to 24 cases only. The authors also give minor simplification of the original paper \cite{lemaqu}.

\subsection*{Acknowledgements}
The authors are grateful to Dinesh Thakur for his helpful comments
and for letting us know about the paper of Qibin Shen and Shuhui Shi \cite{shen},
where an argument similar to the one of Section 4 is given.

\small
\begin{thebibliography}{HD}
\bibitem{au00} R. Auer, \emph{Ray class fields of global function fields with many rational places}, Acta Arithmetica 95, 97-122 (2000).
\bibitem{lemaqu} J. Leitzel, M. Madan, C. Queen \emph{Algebraic function fields with small class number}, Journal of Number Theory 7, 11-27 (1975).
\bibitem{maqu} M. Madan, C. Queen \emph{Algebraic function fields of class number one}, Acta Arithmetica 20, 423-432 (1972).
\bibitem{shen} Q. Shen, S. Shi: \emph{Function fields of class number one}, Preprint (2014).
\bibitem{st93} H. Stichtenoth: \emph{Algebraic function fields and codes}. Berlin, Springer-Verlag, 2nd edition (2008).
\bibitem{st0} C. Stirpe \emph{An upper bound for the minimum genus of a curve without points of small degree}, Acta Arithmetica 160, 115-128  (2013).
\bibitem{st1} C. Stirpe \emph{A counterexample to 'Algebraic function fields with small class number'}, Journal of Number Theory 143, 402-404 (2014).
\end {thebibliography}
\footnotesize
\begin{center}{
Pietro Mercuri\\
mercuri.ptr@gmail.com;\\
Claudio Stirpe \\
clast@inwind.it.}
\end{center}
\scriptsize
\begin{landscape}
\begin{align*}
&\begin{array}{|cc|cc|}
\toprule
Q_1+L(k_1,k_2,k_3,k_4)^2 & \text{Low degree point} & Q_3+L(k_1,k_2,k_3,k_4)^2 & \text{Low degree point} \\
\midrule
x_1x_2 + x_3x_4 & (1 : 0 : 0 : 0) & x_1x_3 + x_2x_3 + x_2x_4 + x_3x_4 & (1 : 0 : 0 : 0) \\ 
x_1x_2 + x_3x_4 + x_4^2 & (1 : 0 : 0 : 0) & x_1x_3 + x_2x_3 + x_2x_4 + x_3x_4 + x_4^2 & (1 : 0 : 0 : 0)  \\
x_1x_2 + x_3^2 + x_3x_4 & (1 : 0 : 0 : 0) & x_1x_3 + x_2x_3 + x_3^2 + x_2x_4 + x_3x_4 & (1 : 0 : 0 : 0)  \\
x_1x_2 + x_3^2 + x_3x_4 + x_4^2 & (1 : 0 : 0 : 0) & x_1x_3 + x_2x_3 + x_3^2 + x_2x_4 + x_3x_4 + x_4^2 & (1 : 0 : 0 : 0)  \\
x_1x_2 + x_2^2 + x_3x_4 & (1 : 0 : 0 : 0) & x_2^2 + x_1x_3 + x_2x_3 + x_2x_4 + x_3x_4 & (1 : 0 : 0 : 0)  \\
x_1x_2 + x_2^2 + x_3x_4 + x_4^2 & (1 : 0 : 0 : 0) & x_2^2 + x_1x_3 + x_2x_3 + x_2x_4 + x_3x_4 + x_4^2 & (1 : 0 : 0 : 0)  \\
x_1x_2 + x_2^2 + x_3^2 + x_3x_4 & (1 : 0 : 0 : 0) & x_2^2 + x_1x_3 + x_2x_3 + x_3^2 + x_2x_4 + x_3x_4 & (1 : 0 : 0 : 0)  \\
x_1x_2 + x_2^2 + x_3^2 + x_3x_4 + x_4^2 & (1 : 0 : 0 : 0) & x_2^2 + x_1x_3 + x_2x_3 + x_3^2 + x_2x_4 + x_3x_4 + x_4^2 & (1 : 0 : 0 : 0) \\ 
x_1^2 + x_1x_2 + x_3x_4 & (0 : 0 : 1 : 0) & x_1^2 + x_1x_3 + x_2x_3 + x_2x_4 + x_3x_4 & (0 : 1 : 0 : 0) \\ 
x_1^2 + x_1x_2 + x_3x_4 + x_4^2 & (0 : 0 : 1 : 0) & x_1^2 + x_1x_3 + x_2x_3 + x_2x_4 + x_3x_4 + x_4^2 & (0 : 1 : 0 : 0)  \\
x_1^2 + x_1x_2 + x_3^2 + x_3x_4 & (1 : 0 : 1 : 0) & x_1^2 + x_1x_3 + x_2x_3 + x_3^2 + x_2x_4 + x_3x_4 & (0 : 1 : 0 : 0)  \\
x_1^2 + x_1x_2 + x_3^2 + x_3x_4 + x_4^2 & (1 : 0 : 1 : 0) & x_1^2 + x_1x_3 + x_2x_3 + x_3^2 + x_2x_4 + x_3x_4 + x_4^2 & (0 : 1 : 0 : 0) \\ 
x_1^2 + x_1x_2 + x_2^2 + x_3x_4 & (0 : 0 : 1 : 0) & x_1^2 + x_2^2 + x_1x_3 + x_2x_3 + x_2x_4 + x_3x_4 & (1 : 1 : 1 : 1)  \\
x_1^2 + x_1x_2 + x_2^2 + x_3x_4 + x_4^2 & (0 : 0 : 1 : 0) & x_1^2 + x_2^2 + x_1x_3 + x_2x_3 + x_2x_4 + x_3x_4 + x_4^2 & (0 : 0 : 1 : 1) \\ 
x_1^2 + x_1x_2 + x_2^2 + x_3^2 + x_3x_4 & (1 : 0 : 1 : 0) &  x_1^2 + x_2^2 + x_1x_3 + x_2x_3 + x_3^2 + x_2x_4 + x_3x_4 & (0 : 0 : 1 : 1) \\ 
x_1^2 + x_1x_2 + x_2^2 + x_3^2 + x_3x_4 + x_4^2 & (1 : 0 : 1 : 0) & x_1^2 + x_2^2 + x_1x_3 + x_2x_3 + x_3^2 + x_2x_4 + x_3x_4 + x_4^2 & (1 : 1 : 1 : 1) \\ 
\bottomrule
Q_2+L(k_1,k_2,k_3,k_4)^2 & \text{Low degree point} & Q_4+L(k_1,k_2,k_3,k_4)^2 & \text{Low degree point} \\
\midrule
x_1x_2 + x_1x_3 + x_1x_4 + x_2x_4 & (0 : 0 : 1 : 0) & x_2x_3 + x_1x_4 + x_3x_4 & (0 : 1 : 0 : 0) \\
x_1x_2 + x_1x_3 + x_1x_4 + x_2x_4 + x_4^2 & (0 : 0 : 1 : 0) & x_2x_3 + x_1x_4 + x_3x_4 + x_4^2 & (0 : 1 : 0 : 0) \\
x_1x_2 + x_1x_3 + x_3^2 + x_1x_4 + x_2x_4 & (0 : 0 : 0 : 1) & x_2x_3 + x_3^2 + x_1x_4 + x_3x_4 & (0 : 1 : 0 : 0) \\
x_1x_2 + x_1x_3 + x_3^2 + x_1x_4 + x_2x_4 + x_4^2 & (1 : 1 : 1 : 1) & x_2x_3 + x_3^2 + x_1x_4 + x_3x_4 + x_4^2 & (0 : 1 : 0 : 0) \\
x_1x_2 + x_2^2 + x_1x_3 + x_1x_4 + x_2x_4 & (0 : 0 : 1 : 0) & x_2^2 + x_2x_3 + x_1x_4 + x_3x_4 & (1 : 1 : 1 : 1) \\
x_1x_2 + x_2^2 + x_1x_3 + x_1x_4 + x_2x_4 + x_4^2 & (0 : 0 : 1 : 0) & x_2^2 + x_2x_3 + x_1x_4 + x_3x_4 + x_4^2 & (\beta : 0 : \beta^3 : 1)\\
x_1x_2 + x_2^2 + x_1x_3 + x_3^2 + x_1x_4 + x_2x_4 & (0 : 0 : 0 : 1) & x_2^2 + x_2x_3 + x_3^2 + x_1x_4 + x_3x_4 & (1 : 0 : \alpha : 1)\\ 
x_1x_2 + x_2^2 + x_1x_3 + x_3^2 + x_1x_4 + x_2x_4 + x_4^2 & (1 : 0 : 1 : 0) & x_2^2 + x_2x_3 + x_3^2 + x_1x_4 + x_3x_4 + x_4^2 & (1 : 1 : 1 : 1) \\
x_1^2 + x_1x_2 + x_1x_3 + x_1x_4 + x_2x_4 & (0 : 0 : 1 : 0) & x_1^2 + x_2x_3 + x_1x_4 + x_3x_4 & (1 : 1 : 1 : 1) \\
x_1^2 + x_1x_2 + x_1x_3 + x_1x_4 + x_2x_4 + x_4^2 & (0 : 0 : 1 : 0) & x_1^2 + x_2x_3 + x_1x_4 + x_3x_4 + x_4^2 & (0 : 1 : 0 : 0) \\
x_1^2 + x_1x_2 + x_1x_3 + x_3^2 + x_1x_4 + x_2x_4 & (0 : 0 : 0 : 1) & x_1^2 + x_2x_3 + x_3^2 + x_1x_4 + x_3x_4 & (0 : 1 : 0 : 0) \\
x_1^2 + x_1x_2 + x_1x_3 + x_3^2 + x_1x_4 + x_2x_4 + x_4^2 & \text{Point of degree 4} & x_1^2 + x_2x_3 + x_3^2 + x_1x_4 + x_3x_4 + x_4^2 & (0 : 1 : 0 : 0) \\
x_1^2 + x_1x_2 + x_2^2 + x_1x_3 + x_1x_4 + x_2x_4 & (0 : 0 : 0 : 1) & x_1^2 + x_2^2 + x_2x_3 + x_1x_4 + x_3x_4 & (0 : \alpha : 1 : 1)\\ 
x_1^2 + x_1x_2 + x_2^2 + x_1x_3 + x_1x_4 + x_2x_4 + x_4^2 & (0 : 0 : 1 : 0) & x_1^2 + x_2^2 + x_2x_3 + x_1x_4 + x_3x_4 + x_4^2 & (1 : 1 : 1 : 1) \\
x_1^2 + x_1x_2 + x_2^2 + x_1x_3 + x_3^2 + x_1x_4 + x_2x_4 & (0 : 0 : 0 : 1) & x_1^2 + x_2^2 + x_2x_3 + x_3^2 + x_1x_4 + x_3x_4 & (1 : 1 : 1 : 1) \\
x_1^2 + x_1x_2 + x_2^2 + x_1x_3 + x_3^2 + x_1x_4 + x_2x_4 + x_4^2 & (1 : 1 : 1 : 1) & x_1^2 + x_2^2 + x_2x_3 + x_3^2 + x_1x_4 + x_3x_4 + x_4^2 & (0 : \alpha : 1 : 1)\\ 
\midrule
\bottomrule
\end{array} \\
&\begin{array}{c}
\text{Notation: we denote by }\alpha\in\mathbb{F}_4\text{ a solution of }x^2+x+1=0 \text{ and by }\beta\in\mathbb{F}_8\text{ a solution of }x^3+x+1=0.
\end{array}
\end{align*}

\end{landscape}

\end{document}